\documentclass[11pt]{amsart}
\usepackage{amscd,amssymb,verbatim}
\theoremstyle{plain}
\newtheorem{Thm}{Theorem}[section]
\newtheorem{Cor}[Thm]{Corollary}
\newtheorem{Lem}[Thm]{Lemma}
\newtheorem{Prop}[Thm]{Proposition}
\newtheorem{Def}[Thm]{Definition}
\newtheorem{remark}[Thm]{Remark}

\errorcontextlines=0
\numberwithin{equation}{section}

\begin{document}
\title[New examples of \(c_0\)-saturated Banach Spaces II]
{New examples of \(c_0\)-saturated  Banach spaces II}
\author{I. Gasparis}
\address{Department of Mathematics \\
Aristotle University of Thessaloniki \\
Thessaloniki 54124, Greece.}
\email{ioagaspa@math.auth.gr}
\keywords{\(c_0\)-saturated space, upper \(\ell_p\) estimates, quotient map}
\subjclass{(2000) Primary: 46B03. Secondary: 06A07, 03E02.}
\begin{abstract}
For every Banach space \(Z\) with a shrinking unconditional basis satisfying an upper \(p\)-estimate
for some \(p > 1\), an isomorphically polyhedral Banach space is constructed which has an unconditional
basis and admits a quotient isomorphic to \(Z\). It follows that reflexive Banach spaces with an unconditional
basis and non-trivial type, Tsirelson's original space and \((\sum c_0)_{\ell_p}\) for \(p \in (1, \infty)\),
are isomorphic to quotients of isomorphically polyhedral Banach spaces with unconditional bases.
\end{abstract}
\maketitle
\section{Introduction}
An infinite-dimensional Banach space is \(c_0\)-saturated if every closed, linear, infinite-dimensional
subspace contains a closed, linear subspace isomorphic to \(c_0\). It is classical result \cite{ps}
that every \(C(K)\) space with \(K\) being a countable infinite, compact metric space, is \(c_0\)-saturated.
This result was generalised in \cite{f} to the class of the so-called Lindenstrauss-Phelps
spaces, i.e., spaces whose dual closed unit ball has but countably many 
extreme points. These spaces in turn, belong to the class of the {\em isomorphically polyhedral}  
spaces. We recall that a Banach space is polyhedral if the closed unit ball of each of its finite-dimensional
subspaces has finitely many extreme points. It is isomorphically polyhedral if it is polyhedral under an
equivalent norm. It was proved in \cite{f2} that separable isomorphically polyhedral spaces are \(c_0\)-saturated.

Not much is known about the behavior of isomorphically polyhedral, or more generally \(c_0\)-saturated spaces,
under quotient maps. It was asked in \cite{r2} if the dual of a separable isomorphically polyhedral
space is \(\ell_1\)-saturated that is, every closed, linear, infinite-dimensional subspace of the dual
contains a further subspace isomorphic to \(\ell_1\). It is an open problem (\cite{o}, \cite{r2}, \cite{r3})
if every quotient of a \(C(K)\) space with \(K\) a countable and compact metric space, 
is \(c_0\)-saturated. It was shown in \cite{g} that for every \(p \in (1, \infty)\),
\(\ell_p\) is isomorphic to a quotient of an isomorphically polyhedral space with an unconditional basis.
The purpose of this article is to extend this result by showing the following
\begin{Thm} \label{main}
Let \(Z\) be a Banach space with a shrinking, unconditional basis satisfying an upper \(p\)-estimate for
some \(p > 1\). Then, there exists an isomorphically polyhedral space with an unconditional basis which
admits a quotient isomorphic to \(Z\).
\end{Thm}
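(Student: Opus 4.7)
The plan is to mimic and extend the construction of \cite{g}, which established the special case \(Z=\ell_p\). The space \(X\) will be the completion of \(c_{00}(\mathbb{N}\times\mathbb{N})\) under a suitable norm, with basis vectors \((e_{(n,k)})\) organized into ``columns'' indexed by \(n\), and a quotient map \(Q:X\to Z\) sending \(e_{(n,k)}\) to \(\lambda_{n,k}\,z_n\) for suitable weights \(\lambda_{n,k}\). The presence of infinitely many preimages of each \(z_n\) in the basis of \(X\) is the source of the \(c_0\)-slack that must make \(X\) isomorphically polyhedral, while the weights are tuned so that \(Q\) is bounded and surjective onto the (generally non-polyhedral) space \(Z\).

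The norm on \(X\) will be defined as a supremum over a countable family \(\mathcal{A}\) of admissible ``selectors'' \(\phi\) --- finitely supported partial functions \(\mathbb{N}\to\mathbb{N}\) whose domains lie in a Schreier-like family and whose values satisfy a growth condition --- of the quantities
\[
\Bigl\|\sum_{n\in\operatorname{dom}\phi} a_{(n,\phi(n))}\,z_n\Bigr\|_Z.
\]
Closure of \(\mathcal{A}\) under sign changes yields unconditionality of \((e_{(n,k)})\). Surjectivity of \(Q\) with bounded section requires, for each \(z=\sum b_n z_n\in Z\), the construction of a lift \(\tilde z\in X\) with \(\|\tilde z\|_X\leq C\|z\|_Z\); the lift is produced by spreading each coefficient \(b_n\) across many vectors in column \(n\), compensated by the weights \(\lambda_{n,k}\). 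The upper \(p\)-estimate on \(Z\) enters here decisively: it bounds the contribution of such a spread to any single admissible selector independently of the degree of spreading, so that the lift remains of controlled \(X\)-norm. Shrinkingness of \((z_n)\) ensures that these infinite lifts actually converge in \(X\) and that \(Q\) is genuinely surjective, not merely of dense range.

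The main obstacle is proving \(X\) isomorphically polyhedral. The strategy is to embed \(X\) isomorphically into \(C(K)\) for a countable scattered compact metric space \(K\), built from \(\mathcal{A}\) equipped with a natural topology reflecting the ordinal structure of the underlying Schreier-type family. Since \(C(K)\) for such \(K\) is a Lindenstrauss--Phelps space, hence isomorphically polyhedral by \cite{ps,f}, and polyhedrality passes to closed subspaces, this will yield the conclusion. The delicate point is ensuring that the evaluation functionals defining the embedding extend continuously across the limit points of \(K\); this requires a careful calibration of the Schreier growth condition built into \(\mathcal{A}\) (which must be sparse enough for polyhedrality yet rich enough to witness the \(Z\)-norm on lifts) against the quantitative upper \(p\)-estimate on \(Z\). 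I expect this balance to be the technical heart of the argument, and the most substantial departure from the \(\ell_p\)-specific calculations of \cite{g}.
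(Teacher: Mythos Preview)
Your architecture (columns mapping onto the basis of \(Z\), Schreier-type admissibility, quotient by averaging) is the right one, but the norm you write down is the wrong object and the polyhedrality step collapses. You set \(\|x\|=\sup_{\phi\in\mathcal{A}}\bigl\|\sum_{n\in\operatorname{dom}\phi} a_{(n,\phi(n))}\,z_n\bigr\|_Z\); these quantities are not linear functionals on \(X\), so no linear map \(X\to C(\mathcal{A})\) arises from them, and the embedding into \(C(K)\) with \(K\) countable and scattered cannot be produced as described. If you unpack each \(Z\)-norm as a supremum over \(B_{Z^*}\), the resulting norming set of \emph{linear} functionals on \(X\) is \(\bigcup_{\phi}S_\phi^*(B_{Z^*})\), which is uncountable and has no reason to be scattered: for \(Z=\ell_2\) each \(S_\phi^*(B_{Z^*})\) is a Euclidean ball sitting inside \(X^*\). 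Polyhedrality does not survive this, and neither does \(c_0\)-saturation.

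The paper's fix is never to let the \(Z\)-norm enter the definition of \(\|\cdot\|_X\) as a norm. Its norming set \(\mathcal{M}\) consists of genuine finitely supported scalar measures on \(\mathbb{N}\); the \(Z\)-structure is encoded only through a scalar side constraint (``\(Z\)-boundedness'': \(\sum_i\mu(G_i)\le 1\) whenever \(\|\sum_i(|G_i|/|F_i|)z_i\|_Z\le 1\)) together with an \(S_1\)-admissibility condition on blocks. Since each \(\mu\in\mathcal{M}\) decomposes over an \(S_1\)-set into sub-convex combinations of \((e_n^*)\), Elton's criterion gives polyhedrality directly; no \(C(K)\) is ever built. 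Surjectivity is then handled dually, by proving \((u_n^*)=(\sum_{i\in F_n}|F_i|^{-1}e_i^*)\) equivalent to \((z_n^*)\); the hard direction is where the upper \(p\)-estimate enters, via the submultiplicative function \(\phi_Z(k)\) and the inequality \(\phi_Z(k_0)<k_0\). Your invocation of shrinkingness (``so that lifts converge'') is also off the mark: the paper uses it so that \((z_n^*)\) is a basis of \(Z^*\), which is what makes the dual equivalence argument meaningful.
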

We obtain in particular, that reflexive spaces with unconditional bases and non-trivial type, are isomorphic
to quotients of isomorphically polyhedral spaces with unconditional bases. The same property holds
for \((\sum c_0)_{\ell_p}\), for all \(p \in (1, \infty)\). Using the fact that Tsirelson's space \(T\)
\cite{fj} is isomorphic to its modified version (\cite{j2}, \cite{co}), we also obtain that 
Tsirelson's original space \(T^*\) \cite{t} is isomorphic 
to a quotient of an isomorphically polyhedral space with an unconditional basis.

A consequence of Theorem \ref{main} is the existence of new examples of \(c_0\)-saturated spaces admitting
reflexive quotients. The first example of such a space was given in \cite{ckt}, where a certain Orlicz function
space was shown to admit \(\ell_2\) as a quotient. It was proved in \cite{l} that \(\ell_2\) is a quotient of
a \(c_0\)-saturated space with an unconditional basis.

More general results were obtained in \cite{af} with the use of interpolation methods.
They showed that every reflexive space with an unconditional
basis has a block subspace which is isomorphic to a quotient of a \(c_0\)-saturated space.
This result has been recently extended to cover all separable reflexive spaces. It is shown in
\cite{ar} that every such space is a quotient of a \(c_0\)-saturated space with a basis.
\section{Preliminaries}
Our notation is standard as may be found in \cite{lt}. We shall consider Banach spaces 
over the real field. If \(X\) is a Banach space then \(B_X\) stands for its closed unit ball. 
A bounded subset \(B\) of the dual \(X^*\) of \(X\) is {\em norming}, if there exists 
\(\rho > 0\) such that \(\sup_{x^* \in B} |x^*(x)| \geq \rho \|x\|\), for all \(x \in X\).
In case \(B \subset B_{X^*}\) and \(\rho =1\), \(B\) is said to isometrically norm \(X\).

\(X\) is said to contain an {\em isomorph} of the Banach space \(Y\) (or, equivalently, that
\(X\) contains \(Y\) isomorphically), if there exists a bounded linear injection from
\(Y\) into \(X\) having closed range. 

A sequence \((x_n)\) in a Banach space is {\em semi-normalized} if \(\inf_n \|x_n \| > 0 \) and
\(\sup_n \|x_n \| < \infty\). It is called 
a {\em basic} sequence provided it is a Schauder basis for its closed linear
span in \(X\). A Schauder basis \((x_n)\) for the space \(X\) is {\em shrinking}, if
the sequence of functionals \((x_n^*)\), biorthogonal to \((x_n)\), is a Schauder basis
for \(X^*\).

If \((x_n)\) and \((y_n)\) are basic sequences, then 
\((x_n)\) {\em dominates} \((y_n)\) if there is a constant \(C  > 0\) so that  
\(\|\sum_{i=1}^n a_i y_i \| \leq  C \|\sum_{i=1}^n a_i x_i \| \),
for every choice of scalars \((a_i)_{i=1}^n\) and all \(n \in \mathbb{N}\). 
The basic sequences \((x_n)\) and \((y_n)\) are {\em equivalent}, if each one of them dominates 
the other.
A basic sequence \((x_n)\) is called {\em suppression} \(1\)-unconditional, if
\(\|\sum_{i \in F} a_i x_i \|\) \(\leq \|\sum_{i=1}^n a_i x_i\|\), for all \(n \in \mathbb{N}\),
all choices of scalars \((a_i)_{i=1}^n\), and every \(F \subset \{1, \dots , n\}\).
Evidently, such a basic sequence is unconditional, that is 
every series of the form \(\sum_n a_n x_n \) converges unconditionally, whenever it converges.

If \((x_n)\) is a basic sequence in some Banach space \(X\), then a sequence \((u_n)\)
of non-zero vectors in \(X\),
is a {\em block basis} of \((x_n)\) if there 
exist a sequence of non-zero scalars \((a_n)\) and a sequence \((F_n)\) of sucessive finite subsets
of \(\mathbb{N}\) (i.e, \(\max F_n < \min F_{n+1}\) for all \(n \in \mathbb{N}\)), so that
\(u_n = \sum_{i \in F_n} a_i x_i\), for all \(n \in \mathbb{N}\). We then call \(F_n\) the
{\em support} of \(u_n\) for all \(n \in \mathbb{N}\). Any member of a block basis of
\((x_n)\) will be called a {\em block} of \((x_n)\).

A Banach space with an unconditional basis \((e_n)\) satisfies an {\em upper } \(p\)-{\em estimate}, for
some \(p > 1\), if there exists a constant \(C > 0\) so that
\(\|\sum_{i=1}^n u_i \| \leq C (\sum_{i=1}^n \|u_i \|^p)^{1/p}\), for every choice
\((u_i)_{i=1}^n\) of disjointly supported blocks of \((e_n)\).

Given finite subsets \(E\), \(F\) of \(\mathbb{N}\), then the notation
\(E < F\) indicates that \(\max E < \min F\). If \(\mu \), \(\nu\) are finitely supported
signed measures on \(\mathbb{N}\), then we write \(\mu < \nu\) if 
\(\mathrm{ supp } \, \mu < \mathrm{ supp } \, \nu\).

A family \(\mathcal{F}\) of finite subsets of \(\mathbb{N}\) is said to be
{\em compact} if it is compact in the topology of pointwise convergence
in \(2^{\mathbb{N}}\). We next recall the Schreier family
\[S_1 = \{F \subset \mathbb{N}: \, |F| \leq \min F \} \cup \{\emptyset\}.\]
The higher ordinal Schreier families \(\{S_\alpha: \, \alpha < \omega_1\}\), were introduced in \cite{aa}
where it is shown that
\(S_\alpha\) is homeomorphic to the ordinal interval
\([1, \omega^{\omega^\alpha}]\), for all \(\alpha < \omega_1\). 
\section{The main construction} \label{S1}
Let \(Z\) be a Banach space with a normalized, shrinking, unconditional 
basis \((z_n)\). By renorming if necessary, we may assume that
\(\|\sum_n a_n z_n \| = \|\sum_n |a_n| z_n \|\), for every \((a_n) \in c_{00}\).
Define \(\phi_Z \colon \mathbb{N} \to \mathbb{R}\), by
\begin{align}
\phi_Z(k) = \sup &\biggl \{ \biggl \|\sum_{i=1}^k u_i \biggr \|: \, (u_i)_{i=1}^k \text{ are } 
\text{ finitely and disjointly supported} \notag \\
&\text{ blocks of } (z_n), \,
\|u_i \| \leq 1, \, \forall \, i \leq k \biggr \}, \, \forall \, k \in \mathbb{N}. 
\notag
\end{align}
It is easy to see that \(\phi_Z\) is a submultiplicative function, that is
\(\phi_Z(mn) \leq \phi_Z(m) \phi_Z(n)\) for all integers \(m\), \(n\).
It follows from the results of \cite{j} (cf. also Theorem 1.f.12 in
\cite{lt}), that \(Z\) satisfies an upper \(p\)-estimate for some \(p > 1\) if, and only if,
\(\phi_Z(k) < k \) for some \(k \geq 2\).
To define the desired isomorphically polyhedral space, we need to introduce some notations and definitions.
Let \(Z\) satisfy an upper-\(p\) estimate for some \(p > 1\).
For simplicity, we shall write \(\phi\) instead of \(\phi_Z\).

{\bf Notation}. Fix some \(k_0 \geq 2\) with \(\phi(k_0) < k_0\) and choose
\(\lambda \in (\phi(k_0)/k_0, 1)\). Set \(\epsilon_n = (1/k_0)^n\) and
\(\delta_n = \lambda^n\), for all \(n \in \mathbb{N} \cup \{0\}\).

We next choose a sequence \((F_n)\) of successive finite subsets of \(\mathbb{N}\) (i.e., 
\(\max F_n < \min F_{n+1}\) for all \(n \in \mathbb{N}\)) so that
\(\sum_n (1/|F_n|) < 1\) and \(1 + (1/\delta_n) < \epsilon_{n+1} \min F_{n +1}\), for
all \(n \in \mathbb{N} \cup \{0\}\).
 
{\bf Notation}. Given \(n \in \mathbb{N}\), let \(e_n^*\) denote the point mass measure at \(n\).
Let \(\mathcal{P}\) denote the set of non-negative, finitely supported measures \(\mu\) on \(\mathbb{N}\)
of the form \(\mu = \sum_i \lambda_i e_{j_i}^*\), where \(\lambda_i \in [0,1]\) and
\(j_i \in F_i\) for all \(i \in \mathbb{N}\). 

Let \(\mathcal{P}_1 = \{\mu \in \mathcal{P}: \, \sum_i \mu(F_i) \leq 1\}\).
\begin{Def}
A measure \(\mu \in \mathcal{P}\) is said to be \(Z\)-bounded, provided that
\(\sum_i \mu(G_i) \leq 1\) for every sequence \((G_i)\) of subsets of \(\mathbb{N}\)
with \(G_i\) an initial segment of \(F_i\) (we allow \(G_i = \emptyset\)) for
all \(i \in \mathbb{N}\), such that \(\|\sum_i (|G_i|/|F_i|) z_i \|_Z \leq 1\). 
\end{Def} 
\begin{remark} \label{r1}
A typical example of a \(Z\)-bounded measure is as follows: Let \((\rho_i)\) be a sequence
of non-negative scalars such that \(\|\sum_i \rho_i z_i^* \|_{Z^*} \leq 1\). For every
\(i \in \mathbb{N}\), choose an initial segment \(G_i\) of \(F_i\). Let \(I\) be a finite
subset of \(\mathbb{N}\) and define
\(\mu = \sum_{i \in I} \rho_i (|G_i|/|F_i|)e_{\max G_i}^* \).
Then \(\mu\) is \(Z\)-bounded. Indeed, let \((H_i)\) be a sequence of finite subsets of 
\(\mathbb{N}\) with each \(H_i\) being an initial segment of \(F_i\), such that
\(\|\sum_i (|H_i|/|F_i|) z_i \|_Z \leq 1\). Let \(i \in I\). Then, either \(G_i\)
is an initial segment of \(H_i\), or, \(\max G_i > \max H_i\). If the former, then
\(\mu(H_i) = \rho_i (|G_i|/|F_i|)\) and \(|G_i| / |F_i| \leq |H_i| /|F_i|\).
If the latter, then \(\mu(H_i) = 0\). Let \(I_1\) be the subset of \(I\) consisting
of those elements of \(I\) for which the first alternative occurs. Then, 
\(\|\sum_{i \in I_1} (|G_i|/|F_i|) z_i \|_Z \leq 1\), by our initial assumptions on 
\((z_i)\), and so 
\[\sum_i \mu(H_i) = \sum_{i \in I_1} \mu(H_i) = \sum_{i \in I_1} \rho_i (|G_i|/|F_i|) \leq 1.\]
\end{remark} 
\begin{Def}
A finite sequence \((\mu_i)_{i=1}^k\) of non-zero, disjointly supported members of
\(\mathcal{P}_1\) is called admissible, if for every \(n \in \mathbb{N}\) we have that
\(F_n \cap \mathrm{ supp } \, \mu_i \ne \emptyset\) for at most one \(i \leq k\), and,
moreover, if \(F_n \cap \mathrm{ supp } \, \mu_i \ne \emptyset\) for some \(n \in \mathbb{N}\) and
\(i \leq k\), then \(k \leq \min F_n\).
\end{Def}
Note in particular that \(\{ \min \mathrm{ supp } \, \mu_i : \, i \leq k \} \in S_1\) if 
\((\mu_i)_{i=1}^k \) is admissible. We can now describe a norming subset of the space we wish to construct.
\begin{align}
\mathcal{M} &= \biggl \{ \mu \in \mathcal{P}, \mu \text{ is } Z-\text{ bounded}, \, 
\mu = \sum_{i=1}^k \mu_i,  \, k \in \mathbb{N} 
\text{ and } \notag \\
&(\mu_i)_{i=1}^k \text{ is an admissible sequence in } \mathcal{P}_1 
\biggr \} 
\cup \{e_n^* : \, n \in \mathbb{N}\} \cup \{0\}. \notag
\end{align}
It is easy to see that \(\mu | I \in \mathcal{M}\) for every \(\mu \in \mathcal{M}\)
and all \(I \subset \mathbb{N}\). We can now define a norm \(\| \cdot \|_{\mathcal{M}}\)
on \(c_{00}\) by 
\[\|x\|_{\mathcal{M}} = \max \biggl \{ \biggl | \sum_i \mu(\{i\}) x(i) \biggr |: \, \mu \in \mathcal{M}
\biggr \}, \forall \, x \in c_{00}.\]
Let \(X_{\mathcal{M}}\) be the completion of \((c_{00}, \| \cdot \|_{\mathcal{M}})\).
Since \(\mathcal{M}\) is closed under restrictions to subsets of \(\mathbb{N}\),
we obtain that the natural basis \((e_n)\) of \(c_{00}\) becomes a normalized, suppression \(1\)-unconditional basis for 
\(X_{\mathcal{M}}\). Note also that \(\mathcal{M}\) is an isometrically norming subset of
\(B_{X_{\mathcal{M}}^*}\).
Our objective is to show that \(X_{\mathcal{M}}\) is isomorphically polyhedral and admits
a quotient isomorphic to \(Z\). The first task is accomplished through the next result, proved in \cite{g} via 
Elton's theorem \cite{e}.
\begin{Prop} \label{P1}
Let \(X\) be a Banach space with a normalized Schauder basis \((e_n)\). Let
\((e_n^*)\) denote the sequence of functionals biorthogonal to \((e_n)\).
Assume there is a bounded norming subset \(B\) of \(X^*\) with the following property:
There exists a compact family \(\mathcal{F}\) of finite subsets
of \(\mathbb{N}\) such that for every \(b^* \in B\) there exist \(F \in \mathcal{F}\)
and a finite sequence \((b_k^*)_{k \in F}\) of finitely supported absolutely 
sub-convex combinations of \((e_n^*)\) so that
\(b^* = \sum_{k \in F} b_k^*\) and
\(\min \mathrm{ supp } \, b_k^* \geq k \) for all \(k \in F\).
Then, \(\sum_n |b^*(e_n)| < \infty \), for all \(b^* \in \overline{B}^{w^*}\) and 
\(X\) is isomorphically polyhedral.
\end{Prop}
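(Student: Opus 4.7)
The plan is to address the two conclusions of the proposition in order: first, to show that $\sum_n |b^*(e_n)| < \infty$ for every $b^* \in \overline{B}^{w^*}$; second, to invoke Elton's theorem \cite{e} to obtain an equivalent polyhedral norm on $X$.

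For the summability, observe that $X$ is separable (having a Schauder basis), so the norm-bounded set $\overline{B}^{w^*}$ is weak-$*$ metrizable. Fix $b^* \in \overline{B}^{w^*}$, pick a sequence $(b_j^*) \subset B$ with $b_j^* \xrightarrow{w^*} b^*$, and write each $b_j^* = \sum_{k \in F_j} b_{k,j}^*$ as supplied by the hypothesis. By compactness of $\mathcal{F}$ in the pointwise topology, extract a subsequence along which $F_j$ converges pointwise to some $F \in \mathcal{F}$; the key consequence is that for every fixed $n \in \mathbb{N}$, eventually $F_j \cap \{1, \dots, n\} = F \cap \{1, \dots, n\}$, so every $k \in F_j \setminus F$ eventually exceeds $n$ and hence $b_{k,j}^*(e_n) = 0$ for such $k$ and large $j$. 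Each $b_{k,j}^*$ is an absolutely sub-convex combination of $(e_m^*)$, so $\sum_m |b_{k,j}^*(e_m)| \leq 1$ and the family $(b_{k,j}^*)$ is uniformly bounded in $X^*$. A diagonal weak-$*$ compactness argument produces $b_k^* \in X^*$ for each $k \in F$ with $b_{k,j}^* \xrightarrow{w^*} b_k^*$, and Fatou's lemma then yields $\sum_m |b_k^*(e_m)| \leq 1$. Combining with the support-stabilization,
$$b^*(e_n) = \lim_j b_j^*(e_n) = \lim_j \sum_{k \in F} b_{k,j}^*(e_n) = \sum_{k \in F} b_k^*(e_n)$$
for every $n$, whence $\sum_n |b^*(e_n)| \leq |F| < \infty$.

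For the polyhedrality, I would then apply Elton's theorem \cite{e} in the manner of \cite{g}: the summability just established on $\overline{B}^{w^*}$, together with the compact-family structure providing uniform control on the decompositions $b^* = \sum_{k \in F} b_k^*$ (with supports pushed progressively to the right), places $X$ within the scope of Elton's extremal criterion and produces an equivalent norm under which the unit ball of every finite-dimensional subspace has finitely many extreme points.

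The main obstacle lies in this second step. The summability part is essentially a diagonal extraction, routine once the support-stabilization consequence of pointwise convergence $F_j \to F$ is observed. Invoking Elton's theorem, by contrast, requires carefully matching the summability and admissibility data to the precise extremal hypothesis of Elton's criterion for polyhedrality---this is where the true content of the proposition resides, and presumably the technical core of the argument in \cite{g}.
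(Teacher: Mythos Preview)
The paper does not supply a proof of this proposition; it is quoted as a result from \cite{g}, established there via Elton's theorem \cite{e}. Your proposal is therefore aligned with what the paper indicates: the summability sketch (stabilize \(F_j \to F\) in the compact family \(\mathcal{F}\), use \(\min \mathrm{supp}\, b_{k,j}^* \geq k\) to kill the tail contributions at each fixed \(e_n\), pass to weak-\(*\) limits of the finitely many \(b_{k,j}^*\) for \(k \in F\), and bound \(\sum_n |b^*(e_n)|\) by \(|F|\)) is correct, and your deferral of the polyhedrality step to Elton's criterion as worked out in \cite{g} is exactly the paper's own stance.
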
 
\begin{Cor} \label{C1}
\(X_{\mathcal{M}}\) is isomorphically polyhedral and \((e_n)\) is an unconditional, shrinking normalized basis for 
\(X_{\mathcal{M}}\).
\end{Cor}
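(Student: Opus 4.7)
The plan is to derive Corollary \ref{C1} directly from Proposition \ref{P1}, using $\mathcal{M}$ as the norming set and the Schreier family $S_1$ (which is compact in the product topology) as the family $\mathcal{F}$. Polyhedrality will be the immediate output of this application, while shrinkingness of $(e_n)$ will follow from Fonf's theorem \cite{f2} (polyhedral $\Rightarrow$ $c_0$-saturated) together with James' theorem, since an unconditional basis in a Banach space containing no isomorph of $\ell_1$ must be shrinking.

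The decomposition required by Proposition \ref{P1} is essentially read off the definitions. Given a non-zero $\mu \in \mathcal{M}$, either $\mu = e_n^*$, in which case I take $F = \{n\} \in S_1$ and $b_n^* = e_n^*$, or $\mu = \sum_{i=1}^k \mu_i$ with $(\mu_i)_{i=1}^k$ admissible. In the latter case set $k_i = \min \mathrm{supp}\,\mu_i$, let $F = \{k_i : 1 \leq i \leq k\}$, and put $b_{k_i}^* = \mu_i$. The observation recorded just after the admissibility definition gives $F \in S_1$. Moreover, each $\mu_i \in \mathcal{P}_1$ is a non-negative measure with total mass $\sum_j \mu_i(F_j) \leq 1$, hence a positive sub-convex combination of $(e_n^*)$, and $\min \mathrm{supp}\,b_{k_i}^* = k_i$, so the required support condition $\min \mathrm{supp}\, b_{k_i}^* \geq k_i$ holds trivially. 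Proposition \ref{P1} now delivers both that $\sum_n |b^*(e_n)| < \infty$ for every $b^* \in \overline{\mathcal{M}}^{w^*}$ and that $X_{\mathcal{M}}$ is isomorphically polyhedral.

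Normalization, suppression-$1$-unconditionality, and the basis property of $(e_n)$ for $X_{\mathcal{M}}$ were already noted immediately before the corollary is stated. To finish, Fonf's theorem yields that the separable polyhedral space $X_{\mathcal{M}}$ is $c_0$-saturated, so it contains no copy of $\ell_1$; James' theorem then forces the unconditional basis $(e_n)$ to be shrinking. I do not expect a serious obstacle: the definitions of $\mathcal{P}_1$ and of admissibility were tailored so that $S_1$ witnesses the decomposition with no additional work. The $Z$-boundedness clause in the definition of $\mathcal{M}$ plays no role here; it is reserved for the later construction of the quotient map onto $Z$.
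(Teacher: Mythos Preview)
Your proposal is correct and follows essentially the same route as the paper's own proof: apply Proposition~\ref{P1} with \(\mathcal{F}=S_1\), decomposing each \(\mu\in\mathcal{M}\) either trivially (when \(\mu=e_n^*\)) or via its admissible representation \(\mu=\sum_{i=1}^k\mu_i\) indexed by \(\{\min\mathrm{supp}\,\mu_i:i\le k\}\in S_1\), and then deduce shrinkingness from \(c_0\)-saturation and James' theorem. The only cosmetic difference is that you spell out Fonf's theorem explicitly for the \(c_0\)-saturation step, whereas the paper takes it as already established.
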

\begin{proof}
The fact that \((e_n)\) is normalized and unconditional follows directly from the definition of
\(\mathcal{M}\).
Next, let \( \mu \in \mathcal{M}\). We verify that the conditions given in Proposition \ref{P1}
are fulfilled by \(\mu\) with \(\mathcal{F} = S_1\). Indeed, this is obvious when \(\mu = e_n^*\)
for some \(n \in \mathbb{N}\). Otherwise, \(\mu = \sum_{i=1}^k \mu_i \) for some \(k \in \mathbb{N}\) 
and an admissible family \((\mu_i)_{i=1}^k\) of members of \(\mathcal{P}_1\).
Since every \(\mu_i\) is a finitely supported sub-convex combination of \((e_n^*)\), and
\(\{\min \mathrm{ supp } \, \mu_i : \, i \leq k \}
\in S_1\),
the first assertion of the corollary follows from Proposition \ref{P1}.
Since \(X_{\mathcal{M}}\) is \(c_0\)-saturated, it can not contain any isomorph of \(\ell_1\) 
and thus a classical result due to James yields that \((e_n)\) is
shrinking.
\end{proof}  
In the sequel, we shall write \(\| \cdot \|\), resp. \(\| \cdot \|_*\), 
instead of \(\| \cdot \|_{\mathcal{M}}\), resp. \(\| \cdot \|_{X_{\mathcal{M}}^*}\).

The next lemma describes a simple method for selecting subsequences of \((e_n)\), equivalent to
the \(c_0\)-basis. 
\begin{Lem} \label{L1}
Let \(I\) be a finite subset of \(\mathbb{N}\) and
\((G_n)_{n \in I}\) a sequence of finite subsets of \(\mathbb{N}\) with \(G_n\) an initial
segment of \(F_n\) for all \(n \in I \), such that \(\|\sum_{n \in I} (|G_n|/|F_n|)z_n \|_Z \leq 1\).
Then, \(\| \sum_{n \in I} \sum_{k \in G_n} e_k \| \leq 1\).
\end{Lem}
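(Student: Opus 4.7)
The plan is to unpack the definition of the norm $\|\cdot\|$ on $X_{\mathcal{M}}$: since $\mathcal{M}$ isometrically norms $X_{\mathcal{M}}$, it suffices to check that
\[
\sum_{n \in I}\sum_{k \in G_n} \mu(\{k\}) \;=\; \sum_{n \in I} \mu(G_n) \;\leq\; 1
\]
for every $\mu \in \mathcal{M}$. Here I use that the coefficient of $e_k$ in the vector $\sum_{n \in I}\sum_{k \in G_n} e_k$ is $1$ precisely when $k$ lies in some $G_n$.

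I would then split into cases according to the structure of $\mathcal{M}$. The cases $\mu = 0$ and $\mu = e_n^*$ are immediate (in the latter, at most one summand $\mu(G_n)$ is nonzero and it is at most $1$). The only substantive case is $\mu$ being $Z$-bounded of the form $\mu = \sum_{i=1}^{k}\mu_i$ with $(\mu_i)_{i=1}^k$ admissible in $\mathcal{P}_1$. For this, extend the sequence $(G_n)_{n \in I}$ to a sequence indexed by all of $\mathbb{N}$ by setting $G_n = \emptyset$ for $n \notin I$; each $G_n$ is still an initial segment of $F_n$ (trivially for the empty ones), and the hypothesis $\|\sum_{n \in I}(|G_n|/|F_n|)z_n\|_Z \leq 1$ becomes $\|\sum_n (|G_n|/|F_n|)z_n\|_Z \leq 1$. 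The definition of $Z$-boundedness then applies directly and gives
\[
\sum_{n \in I} \mu(G_n) \;=\; \sum_n \mu(G_n) \;\leq\; 1.
\]

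There is essentially no obstacle here: the lemma is a one-step unwinding of the definition of $Z$-boundedness against the definition of the norming set $\mathcal{M}$. The only thing worth being slightly careful about is the case analysis on $\mathcal{M}$, to ensure the singleton point masses $e_n^*$ and the zero measure are handled even though they are not $Z$-bounded in the sense of a nontrivial admissible decomposition.
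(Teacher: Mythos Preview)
Your proposal is correct and follows essentially the same approach as the paper: reduce to checking \(\mu(u)\le 1\) for each \(\mu\in\mathcal{M}\), dispose of the point masses \(e_n^*\) (and \(0\)) trivially, and for the remaining \(Z\)-bounded measures invoke the definition of \(Z\)-boundedness directly against the sequence \((G_n)\). The only difference is cosmetic: you make explicit the extension of \((G_n)_{n\in I}\) by empty sets to all of \(\mathbb{N}\), which the paper leaves implicit.
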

\begin{proof}
Set \(u = \sum_{n \in I} \sum_{k \in G_n} e_k \). We show that \(\mu (u) \leq 1\)
for all \(\mu \in \mathcal{M}\). In case \(\mu = e_n^*\)
for some \(n \in \mathbb{N}\), then the assertion trivially holds.
Every other element \(\mu \in \mathcal{M}\) is \(Z\)-bounded
and so \(\mu (u) = \sum_{n \in I} \mu(G_n) \leq 1\), as \(\|\sum_{n \in I} (|G_n|/|F_n|)z_n \|_Z \leq 1\).
\end{proof}
\section{ \(Z\) is isomorphic to a quotient of \(X_{\mathcal{M}}\)} \label{S2}
The main result of this section is the following
\begin{Thm} \label{T3}
Let \(u_n^* = \sum_{i \in F_n} (1/|F_i|) e_i^* \), for all \(n \in \mathbb{N}\).
Then \((u_n^*)\) is equivalent to \((z_n^*)\), the sequence of functionals biorthogonal to \((z_n)\).
\end{Thm}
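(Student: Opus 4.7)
The plan is to realize $(u_n^*)$ as the image under the adjoint of a bounded quotient map $Q\colon X_{\mathcal{M}}\to Z$, defined on the unit vector basis by $Q(e_i)=(1/|F_n|)z_n$ whenever $i\in F_n$ and $Q(e_i)=0$ otherwise. A direct computation gives $Q^*(z_n^*)=u_n^*$, so the desired equivalence will follow once I show that $Q$ is a bounded surjection: boundedness yields the estimate $\|\sum_n a_n u_n^*\|_*\leq\|Q\|\,\|\sum_n a_n z_n^*\|_{Z^*}$, while surjectivity gives the reverse estimate by the open mapping theorem applied to $Q^*$.

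For the surjectivity side, I would construct explicit preimages using Lemma \ref{L1}. Given a finitely supported $y=\sum_n y_n z_n\in B_Z$ with $y_n\geq 0$, I take $G_n$ to be the initial segment of $F_n$ of cardinality $\lfloor y_n|F_n|\rfloor$; since $|G_n|/|F_n|\leq y_n$ and the basis $(z_n)$ is absolute, Lemma \ref{L1} gives that $x_y:=\sum_n \mathbf{1}_{G_n}\in B_{X_{\mathcal{M}}}$, while $u_n^*(x_y)=|G_n|/|F_n|$ agrees with $y_n$ up to an error of at most $1/|F_n|$. The summability $\sum_n 1/|F_n|<1$ built into the construction absorbs these errors uniformly, so pairing $\sum_n a_n u_n^*$ against $x_y$ recovers $\sum_n a_n y_n$ up to a constant; taking the supremum over $y\in B_Z$ then yields $\|\sum_n a_n z_n^*\|_{Z^*}\lesssim \|\sum_n a_n u_n^*\|_*$.

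For the boundedness of $Q$, by $Z^*$-duality and suppression-$1$-unconditionality it suffices to prove that whenever $c_n\geq 0$ with $\|\sum_n c_n z_n^*\|_{Z^*}\leq 1$, the functional $\nu_c:=\sum_n c_n u_n^*$ satisfies $\|\nu_c\|_*\leq C$. A standard $Z$/$Z^*$ duality check confirms that $\nu_c$ is $Z$-bounded in the sense of the definition preceding Remark \ref{r1}. The genuine obstacle is that $\nu_c$ is typically \emph{not} itself a member of the norming set $\mathcal{M}$: its total $\mathcal{P}_1$-mass $\sum_n c_n$ can be much larger than $1$, and admissible decompositions of $\nu_c$ are constrained by the rule $k\leq\min F_{n_0}$ on the allowed number of pieces. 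My plan is to decompose $\nu_c$ greedily from left to right into successive blocks of $c$-mass at most $1$ -- each forming a single $\mathcal{P}_1$-piece lying in $\mathcal{M}$ -- and then to bound the number of required blocks by a uniform constant, exploiting the rapid growth $1+1/\delta_n<\epsilon_{n+1}\min F_{n+1}$ of $(\min F_n)$ together with the concentration of $(c_n)$ in $B_{Z^*}$ forced by the upper $p$-estimate on $Z$. This combinatorial step, matching the admissibility cap $k\leq\min F_{n_0}$ to the tail behavior of $(c_n)$, is where I expect the main technical work to lie.
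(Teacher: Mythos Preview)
Your surjectivity argument is essentially the paper's Lemma~\ref{L2}: build $x_y=\sum_n \mathbf{1}_{G_n}$ via Lemma~\ref{L1}, pair against it, and absorb the $1/|F_n|$ rounding errors using $\sum_n 1/|F_n|<1$. That half is fine.

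The boundedness half has a genuine gap, in two places.

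\emph{Structural.} The functional $\nu_c=\sum_n c_n u_n^*=\sum_n\sum_{i\in F_n}(c_n/|F_n|)e_i^*$ is not in $\mathcal{P}$ at all: by definition every $\mu\in\mathcal{P}$ satisfies $|\mathrm{supp}\,\mu\cap F_n|\le 1$ for each $n$, whereas $\nu_c$ is spread over the whole of $F_n$. So the sentence ``$\nu_c$ is $Z$-bounded in the sense of the definition preceding Remark~\ref{r1}'' does not type-check, and no restriction of $\nu_c$ to a block of indices $n$ is ``a single $\mathcal{P}_1$-piece lying in $\mathcal{M}$.''

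\emph{Quantitative.} Even if one ignores the support issue, the greedy mass-$1$ decomposition cannot terminate in a uniformly bounded number of steps. Take $Z=\ell_p$, $p>1$, and $c_n=N^{-1/q}$ for $n\le N$ (so $\|\sum c_n z_n^*\|_{q}=1$); then $\sum_n c_n=N^{1/p}$, and a greedy partition into pieces of total $c$-mass $\le 1$ needs roughly $N^{1/p}$ pieces. The admissibility cap $k\le\min F_{n_0}$ you invoke is governed by the \emph{smallest} index present, and nothing forces $c$ to be supported far out; the growth condition $1+1/\delta_n<\epsilon_{n+1}\min F_{n+1}$ does not help here. There is no ``concentration of $(c_n)$ forced by the upper $p$-estimate'' that bounds $\sum_n c_n$.

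The paper's route to $\|\nu_c\|_*\le C$ is quite different: it never tries to place $\nu_c$ inside (a bounded sum of) $\mathcal{M}$. Instead it tests $\nu_c$ against an arbitrary $u=\sum_i a_i e_i\in B_{X_{\mathcal{M}}}$ and stratifies the \emph{coordinates of $u$} by size, $\{j: a_j\in(\epsilon_{n+1},\epsilon_n]\}$. For each level $n$ one records, for every $i>n$, the last index $j_i^n\in F_i$ at that level and the initial segment $G_i^n$ ending there; the contribution is then controlled by two lemmas. Lemma~\ref{L3} shows that a $Z$-bounded measure in $\mathcal{P}$ whose support sits on coordinates with $a_j\ge\epsilon_{n+1}$ has norm $\le 2$ (this handles the ``leftover'' indices after extracting a maximal family of bad sets). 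Lemma~\ref{L4} bounds the number of disjoint ``bad'' sets $J$ (those with $\|\sum_{i\in J}(|G_i^n|/|F_i|)z_i\|_Z\le 1$ and $\sum_{i\in J}\rho_i|G_i^n|/|F_i|\ge\delta_n$) and uses the submultiplicativity of $\phi_Z$ to control their total contribution by $(1+1/\delta_n)\phi(k_0)^{n+1}$. Summing over $n$ gives a convergent series because $\phi(k_0)/k_0<\lambda<1$. The growth condition on $(\min F_n)$ is used inside these lemmas to guarantee that the auxiliary measures one builds are admissible, not to cap the number of pieces of $\nu_c$ itself.
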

The proof of this result will follow after a
series of lemmas, where we first show that \((u_n^*)\) dominates \((z_n^*)\) and then that it
is dominated by \((z_n^*)\). Note that our initial assumptions on \((z_n)\) yield that \((z_n^*)\)
is a normalized, suppression \(1\)-unconditional basis for \(Z^*\).  
\begin{Lem} \label{L2}
\((u_n^*)\) dominates \((z_n^*)\).
\end{Lem}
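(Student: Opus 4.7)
My plan is to prove $\|\sum_n a_n z_n^*\|_{Z^*}\le 2\|\sum_n a_n u_n^*\|_*$ via duality in $Z$. By the disjointness of the $F_n$ and the suppression $1$-unconditionality of $(e_n)$, the sequence $(u_n^*)$ is $1$-unconditional; similarly $(z_n^*)$ is $1$-unconditional by the renorming assumption on $(z_n)$. Hence it suffices, after replacing $(a_n)$ by $(|a_n|)$, to assume $a_n\ge 0$ and finitely supported. Writing $v=\sum_n a_n u_n^*$, shrinking of $(z_n)$ gives
\[
\Bigl\|\sum_n a_n z_n^*\Bigr\|_{Z^*}=\sup\Bigl\{\sum_n a_n b_n:\ b_n\ge 0,\ \Bigl\|\sum_n b_n z_n\Bigr\|_Z\le 1\Bigr\},
\]
so the task reduces to bounding $\sum_n a_n b_n$ by a constant multiple of $\|v\|_*$ for each such $(b_n)$, which we may also take finitely supported.

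The first step is a uniform a priori estimate on the coefficients. Applying Lemma~\ref{L1} with $I=\{n\}$ and $G_n=F_n$ (valid since $\|z_n\|_Z=1$), the vector $y_n:=\sum_{j\in F_n}e_j$ has $\|y_n\|_{\mathcal M}\le 1$. Because the $F_m$ are pairwise disjoint and successive, $u_m^*(y_n)=\delta_{mn}$, so $v(y_n)=a_n$ and therefore $a_n\le\|v\|_*$ for every $n$.

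Next I build a test vector in $X_{\mathcal M}$ out of $(b_n)$. Since $(z_n)$ is suppression $1$-unconditional with monotone modulus, $b_n\in[0,1]$; set $g_n=\lfloor b_n|F_n|\rfloor$ and let $G_n$ be the initial segment of $F_n$ of cardinality $g_n$. Then $g_n/|F_n|\le b_n$, so by $1$-unconditionality of $(z_n)$ we have $\|\sum_n (g_n/|F_n|)z_n\|_Z\le 1$. Lemma~\ref{L1} now gives $\|x\|_{\mathcal M}\le 1$ for $x:=\sum_n\sum_{k\in G_n}e_k$. Because $u_m^*=(1/|F_m|)\sum_{i\in F_m}e_i^*$ and the $F_m$ are disjoint, a direct computation gives $v(x)=\sum_n a_n g_n/|F_n|\ge \sum_n a_n b_n-\sum_n a_n/|F_n|$.

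Finally, the a priori bound $a_n\le\|v\|_*$ and the choice $\sum_n 1/|F_n|<1$ combine to give $\sum_n a_n/|F_n|\le\|v\|_*$, and of course $v(x)\le\|v\|_*$, so $\sum_n a_n b_n\le 2\|v\|_*$ as desired. The only delicate point is the rounding loss incurred by passing from $b_n$ to $g_n/|F_n|$; that loss is exactly the quantity $\sum_n a_n/|F_n|$, and the construction is set up so that the $\ell_\infty$-bound on $(a_n)$ provided by Lemma~\ref{L1} together with the summability of $(1/|F_n|)$ absorbs it cleanly. This is the main obstacle and where the prescribed rate of growth of the $F_n$ is used.
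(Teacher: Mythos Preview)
Your proof is correct and follows essentially the same route as the paper: round each $b_n$ down to a multiple of $1/|F_n|$, invoke Lemma~\ref{L1} to get a norm-one test vector, and absorb the rounding loss $\sum_n a_n/|F_n|$ via the summability of $(1/|F_n|)$. One minor imprecision: from suppression $1$-unconditionality of $(e_n)$ you only get that $(u_n^*)$ is suppression $1$-unconditional, not fully $1$-unconditional for signs, so passing from $(a_n)$ to $(|a_n|)$ costs an extra factor of $2$; the paper handles signs by splitting into $I^{+}$ and $I^{-}$ instead, but either way the argument goes through with only a change in the constant.
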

\begin{proof}
Note first that \((u_n^*)\) is a normalized (convex) block basis of \((e_n^*)\) in \(X_{\mathcal{M}}^*\).
This is so since \(\|\sum_{i \in F_n } e_i \| = 1\), for all \(n \in \mathbb{N}\) (observe that the support of every
member of \(\mathcal{M}\) meets each \(F_n\) in at most one point).   
We thus obtain that \((u_n^*)\) is normalized and suppression \(1\)-unconditional.

Now let \(n \in \mathbb{N}\) and \((a_i)_{i=1}^n\) be scalars in \([0,1]\) with 
\(\|\sum_{i=1}^n a_i z_i^* \|_{Z^*} \leq 1\). We next find scalars \((b_i)_{i=1}^n\) in \([0,1]\)
so that \(\|\sum_{i=1}^n b_i z_i \|_Z \leq 1\) and \(\sum_{i=1}^n a_i b_i = \|\sum_{i=1}^n a_i z_i^* \|_{Z^*}\). 

For each \(i \leq n\) choose an initial segment \(G_i\) of \(F_i\) so that
\[|G_i|/|F_i| \leq b_i < (|G_i| / |F_i|) + (1/|F_i|).\]
This choice ensures, thanks to our initial assumptions on \((z_n)\), that 
\[\biggl \| \sum_{i=1}^n (|G_i|/|F_i|) z_i \biggr \|_Z \leq \biggl \| \sum_{i=1}^n b_i z_i \biggr \|_Z \leq 1.\]
Let \(u = \sum_{i=1}^n \sum_{k \in G_i} e_k \). We deduce from Lemma \ref{L1}, that \(\|u\| \leq 1\).
It follows now that 
\begin{align}
\biggl \|\sum_{i=1}^n a_i u_i^*  \biggr \|_* &\geq \sum_{i=1}^n a_i u_i^*(u) 
= \sum_{i=1}^n a_i \sum_{k \in F_i} (1/|F_i|) e_k^*(u) = \sum_{i=1}^n a_i (|G_i|/|F_i|) \notag \\
&\geq \sum_{i=1}^n a_i (b_i - (1/|F_i|)) \geq \biggl \| \sum_{i=1}^n a_i z_i^* \biggr \|_{Z^*}
 - \sum_{i=1}^n (1/|F_i|). \notag
\end{align}
Next suppose that \(n \in \mathbb{N}\) and \((a_i)_{i=1}^n\) is a scalar sequence
satisfying 

\noindent \(\|\sum_{i=1}^n a_i z_i \|_{Z^*} =1\). Let \(I^{+} = \{i \leq n : \, a_i \geq 0 \}\)
and \(I^{-} = I \setminus I^{+}\). Our preceding work yields that
\[ \biggl \|\sum_{i \in I^j} a_i u_i^*  \biggr \|_* \geq  \biggl \| \sum_{i \in I^j} a_i z_i^* \biggr \|_{Z^*}-
\sum_{i \in I^j} (1/|F_i|), \, \forall \, j \in \{+, -\}.\]
We deduce now from the above and the fact that \((u_n^*)\) is suppression \(1\)-unconditional, that
\[2 \biggl \|\sum_{i=1}^n a_i u_i^*  \biggr \|_* \geq 1 - \sum_{i=1}^\infty (1/|F_i|) > 0. \]
Therefore, letting \(A = (1/2)(1 - \sum_{i=1}^\infty (1/|F_i|)) > 0\), we obtain that
\[\biggl \|\sum_{i=1}^n a_i u_i^* \biggr \|_* \geq A 
\biggl \|\sum_{i=1}^n a_i z_i^*  \biggr \|_{Z^*}\] 
for every \(n \in \mathbb{N}\) 
and all choices of scalars \((a_i)_{i=1}^n \subset \mathbb{R}\). The proof of the lemma is now complete.
\end{proof}  
\begin{Lem} \label{L3}
Let \(u = \sum_i a_i e_i \) be a finitely supported vector in \(X_{\mathcal{M}}\), with \(\|u \| \leq 1\)
and \(a_i \geq 0\), for all \(i \in \mathbb{N}\). 
Let \(\mu \in \mathcal{P}\) be \(Z\)-bounded and write
\(\mu = \sum_{i \in I} \lambda_i e_{j_i}^* \), where \(I\) is a finite subset of \(\mathbb{N}\),
\(j_i \in F_i\) and \(\lambda_i \in (0, 1]\) for all \(i \in I\). 
Suppose that there exists \(n \in \mathbb{N} \cup \{0\}\) with \(n < \min I\)
such that \(a_{j_i} \geq \epsilon_{n+1} \), for all \(i \in I\).
Then, \(\|\mu\|_* \leq 2\).
\end{Lem}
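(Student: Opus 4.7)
My plan is to bound \(\|\mu\|_*\) by decomposing \(\mu=\nu_1+\nu_2\) with \(\nu_1,\nu_2\in\mathcal{M}\), so that \(\|\mu\|_*\le\|\nu_1\|_*+\|\nu_2\|_*\le 2\). I would look for \(\nu_j=\sum_{i\in I}\lambda_i^{(j)}e_{j_i}^*\) with \(\lambda_i^{(j)}\in[0,\lambda_i]\) and \(\lambda_i^{(1)}+\lambda_i^{(2)}=\lambda_i\): then \(Z\)-boundedness of each \(\nu_j\) is inherited from \(\mu\) by pointwise domination, and if moreover \(\sum_{i\in I}\lambda_i^{(j)}\le 1\) then \(\nu_j\in\mathcal{P}_1\), so the trivial \(k=1\) admissible decomposition places \(\nu_j\) in \(\mathcal{M}\). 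Since each \(\lambda_i\le 1\), such a weight-split exists precisely when the total mass \(\sum_{i\in I}\lambda_i\) is at most \(2\).

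Thus the real task is to prove the a priori bound \(\sum_{i\in I}\lambda_i\le 2\). First I exploit the vector \(u\): by suppression-\(1\) unconditionality of \((e_n)\) in \(X_{\mathcal{M}}\) together with monotonicity of the norm on the positive cone (both consequences of \(\mathcal{M}\subset\mathcal{P}\) consisting of non-negative measures), I pass to \(u'=\sum_{i\in I}a_{j_i}e_{j_i}\) of norm at most \(1\). The lower bound \(a_{j_i}\ge\epsilon_{n+1}\) then forces
\[
\biggl\|\sum_{i\in I}e_{j_i}\biggr\|\le\frac{1}{\epsilon_{n+1}}=k_0^{n+1}.
\]
For each \(i\in I\), let \(H_i\) be the initial segment of \(F_i\) ending at \(j_i\) and set \(r_i=|H_i|/|F_i|\in(0,1]\). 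Applying the \(Z\)-boundedness of \(\mu\) to the family of initial segments \(G_k=H_k\) for \(k\in I'\) and \(G_k=\emptyset\) otherwise yields
\[
\biggl\|\sum_{i\in I'}r_iz_i\biggr\|_Z\le 1\;\Longrightarrow\;\sum_{i\in I'}\lambda_i\le 1
\]
for every \(I'\subseteq I\).

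The crux is then to partition \(I=I_1\sqcup I_2\) so that both \(\|\sum_{i\in I_j}r_iz_i\|_Z\le 1\); by the displayed implication this gives \(\sum_{i\in I}\lambda_i\le 2\) and closes the argument. This is the main obstacle. Lemma \ref{L1} provides only the direction ``\(Z\)-norm controls \(X_{\mathcal{M}}\)-norm'', so the bound \(\|\sum_{i\in I}e_{j_i}\|\le k_0^{n+1}\) from the preceding step must be transferred back into a \(Z\)-norm estimate for \(\sum_{i\in I}r_iz_i\). I expect this transfer to rely crucially on the numerical slack built into the construction---the relation \(1+1/\delta_n<\epsilon_{n+1}\min F_{n+1}\), the bound \(\sum 1/|F_n|<1\), and the strict inequality \(\phi(k_0)<k_0\) combined with the upper \(p\)-estimate---together with the hypothesis \(n<\min I\), which places the relevant indices beyond the threshold where these parameters enforce the desired comparison. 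Once the \(Z\)-norm bound is in hand, a routine greedy bi-partition (valid since every \(r_i\le 1\)) produces the required split, and the reduction in the first paragraph completes the proof.
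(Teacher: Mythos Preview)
Your reduction to the total-mass bound \(\sum_{i\in I}\lambda_i\le 2\) is the wrong target, and that step cannot be repaired. Elements of \(\mathcal{M}\) are not single sub-probabilities: they are sums of an \emph{admissible} family \((\mu_r)_{r=1}^k\subset\mathcal{P}_1\), with \(k\) allowed to be as large as \(\min F_m\) for the relevant \(m\). Under the hypotheses of the lemma the total mass of \(\mu\) is only controlled by roughly \(2/\epsilon_{n+1}=2k_0^{\,n+1}\), not by \(2\); indeed the paper's own argument for the ``small \(\lambda_i\)'' part produces up to \(2/\epsilon_{n+1}\) consecutive blocks each of mass close to \(1\). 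So writing \(\mu=\nu_1+\nu_2\) with \(\nu_1,\nu_2\in\mathcal{P}_1\) is impossible in general, and hence your plan of placing each \(\nu_j\) in \(\mathcal{M}\) via the trivial \(k=1\) decomposition collapses. The ``transfer back'' you hope for---turning the \(X_{\mathcal{M}}\)-estimate \(\|\sum_{i\in I}e_{j_i}\|\le k_0^{\,n+1}\) into a \(Z\)-norm bound on \(\sum_{i\in I}r_i z_i\)---is exactly the unavailable direction (Lemma~\ref{L1} goes only from \(Z\) to \(X_{\mathcal{M}}\)), and the numerical slack you cite does not supply it. Even granting a bound \(\|\sum_{i\in I}r_i z_i\|_Z\le 2\), a ``greedy bi-partition'' into two pieces of \(Z\)-norm \(\le 1\) need not exist for a general unconditional norm.

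What the paper does instead is use the \emph{full} structure of \(\mathcal{M}\). It splits \(I=I_1\cup I_2\) according to whether \(\lambda_i\ge 1/2\) and shows that each restriction \(\mu|I_s\) already lies in \(\mathcal{M}\) (so has dual norm \(\le 1\)), not by bounding its mass but by exhibiting an admissible decomposition with at most \(2/\epsilon_{n+1}\) pieces. The vector \(u\) is used exactly to cap the number of pieces: if there were more than \(d=2/\epsilon_{n+1}\), then pairing the corresponding admissible (hence \(\mathcal{M}\)-) measure against \(u\) would exceed \(1\), contradicting \(\|u\|\le 1\). The hypothesis \(n<\min I\) enters through the growth condition \(1+1/\delta_n<\epsilon_{n+1}\min F_{n+1}\), which guarantees \(d\le\min F_i\) for every \(i\in I\) and hence admissibility. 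This is the idea you are missing.
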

\begin{proof}
Set \(I_1 = \{i \in I : \, \lambda_i \geq 1/2\}\) and \(I_2 = I \setminus I_1\).
The assertion of the lemma will follow once we show that 
\(\mu | I_s \in \mathcal{M}\)
for every \(s \leq 2\).
To this end, we first claim that \(|I_1| \leq 2/\epsilon_{n+1}\).

Indeed, if the claim were false, let \(d = 2/ \epsilon_{n +1}\)
and choose \(d+1\) elements \(i_1 < \dots < i_{d+1}\) in \(I_1\). 
Since \(d < \min F_i\), for all \(i \in I\) (by the initial choice of 
the sequence \((F_i)\)) we have that \((\lambda_s e_{j_{i_s}}^*)_{s=1}^{d +1}\)
is an admissible sequence of members of \(\mathcal{P}_1\). Moreover, since
\(\mu\) is \(Z\)-bounded, so is \(\mu | J \), for every \(J \subset \mathbb{N}\).
Therefore, \(\sum_{s=1}^{d+1} \lambda_s e_{j_{i_s}}^* \in \mathcal{M}\) and so
our assumptions on \(u\) yield that
\[1 \geq \|u\| \geq  \sum_{s=1}^{d+1} \lambda_s e_{j_{i_s}}^* (u) =
\sum_{s=1}^{d+1} \lambda_s a_{j_{i_s}} \geq (d+1)( \epsilon_{n + 1} /2) > 1, \]
a contradiction that proves our claim.
It follows now that \((\lambda_i e_{j_i}^*)_{i \in I_1}\) is an admissible family
and hence \(\mu | I_1 \in \mathcal{M}\).

We next show that \(\mu | I_2 \in \mathcal{M}\).
We first choose a non-empty initial segment
\(J_1\) of \(I_2\) which is maximal with respect to the condition
\(\sum_{i \in J_1} \lambda_i \leq 1\). In case \(J_1 = I_2\), 
the assertion follows as \(\mathcal{P}_1 \subset \mathcal{M}\).

If \(J_1\) is a proper initial segment of \(I_2\), then, by maximality, we must have
\[1/2 < \sum_{i \in J_1} \lambda_i \leq 1, \]
as \( \lambda_i < 1/2\), for every \(i \in I_2\).
We now set \(\mu_1 = \sum_{i \in J_1} \lambda_i e_{j_i}^* \). 
This measure belongs to \(\mathcal{P}_1\) and satisfies \(\mu_1(u) > \epsilon_{n +1} /2\) because
\(a_{j_i} \geq \epsilon_{n+1} \), for all \(i \in I\). 

We repeat the same process to \(I_2 \setminus J_1\) and obtain a non-empty initial segment \(J_2\)
of \(I_2 \setminus J_1\), and a measure \(\mu_2 = \sum_{i \in J_2} \lambda_i e_{j_i}^* \) in
\(\mathcal{P}_1\)
so that either \(J_1 \cup J_2 = I_2\), or, \(J_2\) is a proper initial segment of
\(I_2 \setminus J_1\) satisfying \(\mu_2(u) > \epsilon_{n +1} /2\). 
If the former, the process stops. If the latter, the process continues.
Because \(I_2\) is finite, this process will terminate after a finite number of steps, say \(k\).
We shall then have produced successive subintervals \(J_1 < \dots < J_k \) of \(I_2\) 
with \(I_2 = \cup_{r=1}^k J_r\), and
measures \(\mu_1 < \dots < \mu_k \) in \(\mathcal{P}_1\) with
\(\mu_r = \sum_{i \in J_r} \lambda_i e_{j_i}^* \), for all \(r \leq k\).
Moreover, \(\mu_r(u) >  \epsilon_{n+1} /2\), for all \(r < k\).

We claim that \(k \leq d = 2 /\epsilon_{n +1}\). 
Indeed, assuming the contrary, we have by the choice of \(k\), that
\(\mu_r(u) \geq \epsilon_{n +1} /2\), for all \(r \leq d \).
But also, \(2/\epsilon_{n+1} < \min F_i\), as \(i \geq n+1\) for all \(i \in I\), and thus,
\((\mu_r)_{r=1}^d\) is admissible. Since \(\nu = \sum_{r=1}^d \mu_r = \mu | \cup_{r=1}^d J_r\),
it is \(Z\)-bounded and so \(\nu \in \mathcal{M}\).
Therefore,
\[1 \geq \|u\| \geq \sum_{r=1}^d \mu_r (u) > d \epsilon_{n+1} /2 = 1,\]
which is a contradiction. Hence, \(k \leq 2/ \epsilon_{n+1} \leq \min F_i\), for all
\(i \in I\). This implies that \((\mu_r)_{r=1}^k\) is admissible and so
\(\mu | I_2 = \sum_{r=1}^k \mu_r \in \mathcal{M}\), completing the proof of
the lemma.
\end{proof}
\begin{Lem} \label{L4}
Let \(u = \sum_i a_i e_i \) be a finitely supported vector in \(X_{\mathcal{M}}\), with \(\|u \| \leq 1\)
and \(a_i \geq 0\), for all \(i \in \mathbb{N}\). Let \(I\) be a finite subset of \(\mathbb{N}\) and
\(n \in \mathbb{N} \cup \{0\}\) with \(n < \min I\). Suppose that for each \(i \in I\) there exists
an initial segment \(G_i\) of \(F_i\) such that \(a_{\max G_i} \geq \epsilon_{n+1}\). Let
\((\rho_i) \) be a sequence of non-negative scalars such that 
\(\|\sum_i \rho_i z_i^* \|_{Z^*} \leq 1\). Assume further that there exists a family \(\mathcal{J}\)
of pairwise disjoint subsets of \(I\) such that every member \(J\) of \(\mathcal{J}\)
satisfies the following conditions:
\((1)\) \(\|\sum_{i \in J} (|G_i|/|F_i|) z_i \|_Z \leq 1\),
and \((2)\) \(\sum_{i \in J} \rho_i (|G_i|/|F_i|) \geq \delta_n \).
Then, \(|\mathcal{J}| < (1/\epsilon_{n +1 } ) (1 + [1/\delta_n])\) and,
\[\sum_{J \in \mathcal{J}} \sum_{i \in J} \rho_i (|G_i|/|F_i|) \leq 
(1 + 1/\delta_n )\phi(k_0)^{n+1}.\]
\end{Lem}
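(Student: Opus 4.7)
The plan is to package the data carried by each \(J \in \mathcal{J}\) into a test functional in \(\mathcal{M}\), evaluate it against \(u\), and convert \(\|u\| \leq 1\) into both the cardinality bound on \(\mathcal{J}\) and the total-mass bound. For \(J \in \mathcal{J}\), set \(\mu_J = \sum_{i \in J} \rho_i (|G_i|/|F_i|) e_{\max G_i}^*\). Remark \ref{r1} shows that \(\mu_J\), and more generally any sum \(\sum_r \mu_{J_r}\) over disjoint \(J_r\)'s, is \(Z\)-bounded. Duality with condition (1) gives
\[
\sum_i \mu_J(F_i) = \sum_{i \in J} \rho_i (|G_i|/|F_i|) \leq \biggl\|\sum_i \rho_i z_i^*\biggr\|_{Z^*} \biggl\|\sum_{i \in J} (|G_i|/|F_i|) z_i \biggr\|_Z \leq 1,
\]
so \(\mu_J \in \mathcal{P}_1\); and from \(a_{\max G_i} \geq \epsilon_{n+1}\) together with condition (2) one obtains the key lower bound \(\mu_J(u) \geq \epsilon_{n+1} \delta_n\).

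Because the \(J\)'s are pairwise disjoint subsets of \(I\) and \(\min I \geq n+1\), any sub-collection of \((\mu_J)_{J \in \mathcal{J}}\) of cardinality \(k \leq \min F_{n+1}\) is admissible, and its sum therefore lies in \(\mathcal{M}\). If \(|\mathcal{J}| \geq \min F_{n+1}\), taking any \(k = \min F_{n+1}\) of the \(\mu_J\)'s would yield
\[
1 \geq \sum_{r=1}^k \mu_{J_r}(u) \geq k \epsilon_{n+1} \delta_n > (1 + 1/\delta_n)\delta_n > 1,
\]
where the strict inequality uses the prescribed growth \(1 + 1/\delta_n < \epsilon_{n+1} \min F_{n+1}\); contradiction. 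Hence the whole family \((\mu_J)_{J \in \mathcal{J}}\) is admissible, \(\nu := \sum_{J \in \mathcal{J}} \mu_J \in \mathcal{M}\), and testing \(\nu\) against \(u\) produces \(|\mathcal{J}| \epsilon_{n+1} \delta_n \leq 1\), whence \(|\mathcal{J}| \leq 1/(\epsilon_{n+1} \delta_n) < (1 + [1/\delta_n])/\epsilon_{n+1}\).

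For the total-mass bound, set \(v_J = \sum_{i \in J}(|G_i|/|F_i|) z_i\); condition (1) says \(\|v_J\|_Z \leq 1\), and the \(v_J\)'s are disjointly supported blocks of \((z_i)\), so the definition of \(\phi\) gives \(\|\sum_J v_J\|_Z \leq \phi(|\mathcal{J}|)\). Duality then yields
\[
\sum_{J \in \mathcal{J}} \sum_{i \in J} \rho_i(|G_i|/|F_i|) = \biggl\langle \sum_i \rho_i z_i^*, \sum_J v_J \biggr\rangle \leq \phi(|\mathcal{J}|),
\]
and since \(|\mathcal{J}| \leq k_0^{n+1}/\delta_n \leq \lceil 1/\delta_n \rceil k_0^{n+1}\), submultiplicativity of \(\phi\) together with \(\phi(m) \leq m\) bounds this by \(\lceil 1/\delta_n \rceil \phi(k_0)^{n+1} \leq (1 + 1/\delta_n)\phi(k_0)^{n+1}\). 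The main obstacle I anticipate is the admissibility step: collapsing the \(\mu_J\)'s into a single element of \(\mathcal{M}\) rests both on Remark \ref{r1} for the \(Z\)-boundedness of the combined measure and on the careful calibration of \((F_n)\) that rules out the case \(|\mathcal{J}| \geq \min F_{n+1}\); once the cardinality bound is established, the total-mass estimate is a short dual pairing argument.
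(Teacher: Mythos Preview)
Your proof is correct and follows essentially the same strategy as the paper: package each \(J\) into \(\mu_J = \sum_{i \in J} \rho_i(|G_i|/|F_i|)e_{\max G_i}^* \in \mathcal{P}_1\), use Remark \ref{r1} for \(Z\)-boundedness, exploit the growth condition \(1 + 1/\delta_n < \epsilon_{n+1}\min F_{n+1}\) to get admissibility, and test against \(u\) to bound \(|\mathcal{J}|\); then finish with the dual pairing \(\langle \sum \rho_i z_i^*, \sum_J v_J\rangle \leq \phi(|\mathcal{J}|)\) and submultiplicativity of \(\phi\). The only cosmetic difference is that the paper argues by direct contradiction---assuming \(|\mathcal{J}| \geq d = (1+[1/\delta_n])/\epsilon_{n+1}\), picking exactly \(d\) of the \(\mu_J\)'s (which are already admissible since \(d < \min F_{n+1}\)), and deriving \(d \leq 1/(\epsilon_{n+1}\delta_n) < d\)---whereas you run a two-step bootstrap (first rule out \(|\mathcal{J}| \geq \min F_{n+1}\), then test the full family to obtain \(|\mathcal{J}| \leq 1/(\epsilon_{n+1}\delta_n)\)); both routes rest on the same ingredients and yield the same bounds.
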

\begin{proof}
We set \(d = (1/\epsilon_{n +1 } ) (1 + [1/\delta_n])\) and assume, to the contrary,
that \(|\mathcal{J}| \geq d\).
We can now select pairwise disjoint members \(J_1, \dots, J_d\) of \(\mathcal{J}\)
and define
\(\mu_r = \sum_{i \in J_r} \rho_i (|G_i|/|F_i|)e_{\max G_i}^* \), for all \(r \leq d\).
Since \(\|\sum_i \rho_i z_i^* \|_{Z^*} \leq 1\), \((1)\) implies that \(\mu_r \in \mathcal{P}_1\)
for all \(r \leq d\). But also, \(n +1 \leq i\), for all \(i \in I\) and thus,
\(d < \min F_i\) for all \(i \in I\), because of our initial assumptions on \((F_i)\).
It follows now that \((\mu_r)_{r=1}^d\) is admissible. Let \(\mu = \sum_{r=1}^d \mu_r\).
Since \(\mu\) is \(Z\)-bounded (see Remark \ref{r1}), we infer from the above that
\(\mu \in \mathcal{M}\) and therefore,
\begin{align}
1 \geq \mu(u) &= \sum_{r=1}^d \sum_{i \in J_r} \rho_i (|G_i|/|F_i|) a_{\max G_i} \notag \\
&\geq \sum_{r=1}^d \sum_{i \in J_r} \rho_i (|G_i|/|F_i|) \epsilon_{n+1} \geq
\epsilon_{n+1} \delta_n d, \text{ by } (2). \notag
\end{align}
Hence, \(d \leq 1/ (\epsilon_{n+1} \delta_n) < (1/\epsilon_{n +1 } ) (1 + [1/\delta_n]) = d\).
This contradiction shows that \(|\mathcal{J}| < d\), as required.

We next verify the second assertion of the lemma. To this end,
\begin{align}
\sum_{J \in \mathcal{J}} \sum_{i \in J} \rho_i (|G_i|/|F_i|) &=
\biggl ( \sum_i \rho_i z_i^* \biggr ) \biggl (
\sum_{J \in \mathcal{J}} \sum_{i \in J} (|G_i|/|F_i|) z_i \biggr ) \notag \\
&\leq \biggl 
\|\sum_{J \in \mathcal{J}} \sum_{i \in J}  (|G_i|/|F_i|) z_i \biggr \|_Z \leq \phi(|\mathcal{J}|),
\text{ by } (1), \notag \\
&\leq \phi \bigl ( (1/\epsilon_{n+1})(1 + [1/\delta_n]) \bigr ) \leq
\phi(1/\epsilon_{n+1}) \phi(1 + [1/\delta_n]), \notag \\
&\text{ by the submultiplicativity of } \phi, \notag \\
&\leq (1 + 1/ \delta_n) \phi(k_0^{n+1}) \leq (1 + 1/\delta_n) \phi(k_0)^{n+1}, \notag
\end{align}
using once again the fact that \(\phi\) is submultiplicative. This concludes the proof
of the lemma.  
\end{proof}
\begin{Lem} \label{L5}
There exists a constant \(C > 0\) such that for every \(I \subset \mathbb{N}\), finite,
and every collection of scalars \((\rho_i)_{i \in I}\) with
\(\|\sum_i \rho_i z_i^* \|_{Z^*} \leq 1\), we have that
\(\|\sum_{i \in I} \rho_i u_i^* \|_* \leq C\).
Consequently, \((z_i^*)\) dominates \((u_i^*)\).
\end{Lem}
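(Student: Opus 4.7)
The plan is to reduce to positive scalars, to expand $\sum_{j \in F_i} a_j$ by a layer-cake over the scales $\epsilon_n$, and then to balance two opposing bounds on the ``small-mass'' residue after applying Lemma \ref{L4} to the ``big-mass'' part. By the suppression-$1$-unconditionality of $(e_n)$ in $X_{\mathcal{M}}$ and of $(z_n^*)$ in $Z^*$, it suffices to treat $\rho_i \geq 0$ and to test against $u = \sum_j a_j e_j$ with $a_j \in [0,1]$ and $\|u\| \leq 1$. For each $n \geq 0$, introduce $G_i^n$, the maximal initial segment of $F_i$ whose largest element $j$ satisfies $a_j \geq \epsilon_{n+1}$ (and $G_i^n = \emptyset$ otherwise). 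Since $a_j \leq 1 = \epsilon_0$, a layer-cake expansion over $[0,1] = \bigsqcup_n (\epsilon_{n+1}, \epsilon_n]$ yields $\sum_{j \in F_i} a_j \leq \sum_n \epsilon_n |G_i^n|$, so the task reduces to bounding $\sum_n \epsilon_n T_n$ where $T_n := \sum_i \rho_i |G_i^n|/|F_i|$.

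For each $n$ I split indices into the trivial range $i \leq n$ (contributing at most $n$, and $\sum_n n \epsilon_n < \infty$) and the large range $I_n := \{i > n : G_i^n \neq \emptyset\}$, for which the condition $n < \min I_n$ required by Lemma \ref{L4} holds. I then partition $I_n$ into consecutive blocks $J_1 < J_2 < \cdots$, each maximal with $\|\sum_{i \in J_r}(|G_i^n|/|F_i|) z_i\|_Z \leq 1$, and call a block \emph{big} if its mass $\sum_{i \in J_r}\rho_i(|G_i^n|/|F_i|)$ is at least $\delta_n$, \emph{small} otherwise. Applying Lemma \ref{L4} to the big blocks (they assemble into a legitimate $\mathcal{M}$-measure by Remark \ref{r1}) bounds their total mass by $(1 + 1/\delta_n)\phi(k_0)^{n+1}$; multiplying by $\epsilon_n = k_0^{-n}$ produces a geometric series in $n$ with ratio $\phi(k_0)/(k_0\lambda) < 1$.

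The main obstacle is the small-block contribution $S_n$, for which neither natural bound suffices on its own: if $K$ denotes the number of small blocks, then $S_n < K\delta_n$ by definition, while $S_n \leq \phi(K)$ by duality with $\sum \rho_i z_i^*$, disjointness of supports, and the definition of $\phi$. Submultiplicativity of $\phi$ combined with $\phi(k_0) < \lambda k_0$ yields $\phi(K) \leq k_0 K^\beta$ for $\beta := 1 + \log_{k_0}\lambda \in (0,1)$. Taking the minimum of the two bounds and maximising over $K$ at the crossover $K_* = (k_0/\delta_n)^{1/(1-\beta)}$ gives $S_n \leq k_0^{1/(1-\beta)} \delta_n^{-\beta/(1-\beta)}$; since $\lambda = k_0^{\beta-1}$, this simplifies to $\epsilon_n S_n \leq k_0^{1/(1-\beta)} k_0^{-n(1-\beta)}$, which is summable. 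Adding the three contributions yields the required constant $C$, and the ``consequently'' clause follows by homogeneity.
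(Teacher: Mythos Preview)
Your overall strategy---reduce to positive scalars, layer-cake over the scales \(\epsilon_n\), treat the range \(i\le n\) trivially, and apply Lemma~\ref{L4} to the ``big'' blocks---is the right one and matches the paper. The gap is in your treatment of the small-block residue \(S_n\).

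You write that \(S_n < K\delta_n\) and \(S_n \le \phi(K) \le k_0 K^\beta\), and then claim that ``taking the minimum of the two bounds and maximising over \(K\) at the crossover'' yields \(S_n \le k_0^{1/(1-\beta)}\delta_n^{-\beta/(1-\beta)}\). This is fallacious: both \(K\mapsto K\delta_n\) and \(K\mapsto k_0 K^\beta\) are \emph{increasing} in \(K\), so their minimum is also increasing and tends to infinity with \(K\). Since you have no independent upper bound on the number \(K\) of small blocks, these two inequalities together give no \(K\)-free bound on \(S_n\). (The ``crossover'' trick would work if one bound were increasing and the other decreasing in \(K\); here both go the same way.) Nothing in your consecutive-block partition prevents \(K\) from being enormous: each block is only required to have \(Z\)-norm at most \(1\), and many blocks can have tiny mass without any contradiction.

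The paper closes this hole not by combinatorics on blocks but by invoking Lemma~\ref{L3}, and for this the specific choice of the family \(\mathcal{J}_n\) matters. Instead of consecutive intervals, one extracts a \emph{maximal} pairwise-disjoint family \(\mathcal{J}_n\) of ``bad'' subsets of \(I_n\) (those with \(Z\)-norm \(\le 1\) and mass \(\ge \delta_n\)). Maximality then forces: every \(J\subset I_n\setminus\bigcup\mathcal{J}_n\) with \(\|\sum_{i\in J}(|G_i^n|/|F_i|)z_i\|_Z\le 1\) has mass \(<\delta_n\). This is exactly the statement that the measure
\(\mu_n = \sum_{i\in I_n\setminus\bigcup\mathcal{J}_n}\rho_i(|G_i^n|/|F_i|)\,e^*_{\max G_i^n}\)
is, after division by \(\delta_n\), \(Z\)-bounded. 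Now Lemma~\ref{L3} (using \(\|u\|\le 1\) and \(a_{\max G_i^n}\ge \epsilon_{n+1}\)) gives \(\|(1/\delta_n)\mu_n\|_*\le 2\), hence \(\mu_n(u)\le 2\delta_n\); multiplying by \(\epsilon_n/\epsilon_{n+1}=k_0\) yields the summable bound \(2k_0\delta_n\) for the ``small'' part. Your consecutive-block decomposition does not give this \(Z\)-boundedness of the leftover: a union of several of your small blocks may well have \(Z\)-norm \(\le 1\) yet mass \(\ge\delta_n\), so the rescaled residual measure need not be \(Z\)-bounded and Lemma~\ref{L3} does not apply.

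In short: replace the consecutive-block partition by a maximal disjoint family of bad sets and use Lemma~\ref{L3} on the complement; the ``min of two increasing bounds'' step should be deleted.
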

\begin{proof}
The unconditionality of \((z_i^*)\) clearly allows us establish the assertion of the lemma
under the additional assumption that \(\rho_i \geq 0\), for all \(i \in I\).
Given \(n \in \mathbb{N} \cup \{0\}\), let \(I_n = \{i \in I : \, i > n\}\).
Let \(u = \sum_i a_i e_i \) be a finitely supported vector in \(X_{\mathcal{M}}\), with \(\|u \| \leq 1\)
and \(a_i \geq 0\), for all \(i \in \mathbb{N}\). We have the following initial estimate
\begin{align} \label{E1}
\sum_{i \in I}\rho_i u_i^*(u) &= \sum_{i \in I} \rho_i \sum_{j \in F_i} a_j /|F_i| =
\sum_{n=0}^\infty \sum_{i \in I} \rho_i \sum_{j \in F_i : \, a_j \in (\epsilon_{n+1}, \epsilon_n]} a_j /|F_i| 
\\
&\leq \sum_{n=0}^\infty n \epsilon_n + \sum_{n=0}^\infty  \sum_{i \in I_n} \rho_i
\sum_{j \in F_i : \, a_j \in (\epsilon_{n+1}, \epsilon_n]} a_j /|F_i|. \notag
\end{align}
Fix some \(n \in \mathbb{N} \cup \{0\}\). For each \(i \in I_n\), let \(j_i^n\)
denote the largest \(j \in F_i\) so that \(a_j \in (\epsilon_{n +1}, \epsilon_n]\).
In case \(j_i^n\) fails to exist, then the corresponding summand in \eqref{E1} equals \(0\)
and thus it has no effect into our estimates. Let \(G_i^n \) be the initial segment of
\(F_i\) with \(\max G_i^n = j_i^n \). It is clear that
\begin{equation} \label{E2}
\sum_{j \in F_i : \, a_j \in (\epsilon_{n+1}, \epsilon_n]} a_j /|F_i| \leq 
(\epsilon_n / \epsilon_{n+1}) a_{j_i^n} (|G_i^n|/|F_i|), \, \forall \, i \in I_n.
\end{equation}
Call a subset \(J\) of \(I_n\) bad, if 
\[ \biggl \| \sum_{i \in J} (|G_i^n|/|F_i|) z_i \biggr \|_Z \leq 1, \text{ and }
\sum_{i \in J} \rho_i (|G_i^n|/|F_i|) \geq \delta_n .\]
It is clear that we can extract a maximal, under inclusion, family \(\mathcal{J}_n\) 
consisting of pairwise disjoint, bad subsets of \(I_n\). We now observe the following:
If \(J \subset I_n \setminus \cup \mathcal{J}_n \) and
\(\| \sum_{i \in J} (|G_i^n|/|F_i|) z_i \biggr \|_Z \leq 1\), 
then \(\sum_{i \in J} \rho_i (|G_i^n|/|F_i|) < \delta_n \). Indeed, if that were not so,
then \(J\) would be bad, contradicting the maximality of \(\mathcal{J}_n\).

Letting \(\mu_n = \sum_{i \in I_n \setminus \cup \mathcal{J}_n} \rho_i (|G_i^n|/|F_i|) e_{j_i^n}^*\),
we infer from the preceding observation, that \((1/\delta_n) \mu_n \) is a \(Z\)-bounded measure.
On the other hand, \(a_{j_i^n} \geq \epsilon_{n+1}\), for each \(i \in I_n\) for which
\(j_i^n\) exists, and therefore Lemma \ref{L3} yields that
\(\|\mu_n \|_* \leq 2 \delta_n\). Taking in account \eqref{E2}, we obtain the estimate
\begin{align} \label{E3}
\sum_{i \in I_n \setminus \cup \mathcal{J}_n} \rho_i 
\sum_{j \in F_i : \, a_j \in (\epsilon_{n+1}, \epsilon_n]} a_j /|F_i| 
&\leq (\epsilon_n / \epsilon_{n+1}) \sum_{i \in I_n \setminus \cup \mathcal{J}_n} \rho_i 
a_{j_i^n} (|G_i^n|/|F_i|)  \\
&= (\epsilon_n / \epsilon_{n+1}) \mu_n(u) \leq 2k_0 \delta_n .
\notag
\end{align}
We next employ Lemma \ref{L4} to obtain the estimate
\begin{equation} \label{E4}
\sum_{J \in \mathcal{J}_n } \sum_{i \in J} \rho_i (|G_i^n| / |F_i|) \leq
(1 + 1/ \delta_n) \phi(k_0)^{n+1}
\end{equation}
and thus, taking \eqref{E2} into account, we reach the estimate
\begin{align} \label{E5}
&\sum_{i \in \cup \mathcal{J}_n} \rho_i 
\sum_{j \in F_i : \, a_j \in (\epsilon_{n+1}, \epsilon_n]} a_j /|F_i| \\
&\leq (\epsilon_n / \epsilon_{n+1}) \sum_{i \in \cup \mathcal{J}_n} \rho_i 
a_{j_i^n} (|G_i^n|/|F_i|) 
= k_0 \sum_{J \in \mathcal{J}_n} \sum_{i \in J} \rho_i a_{j_i^n} (|G_i^n|/|F_i|) \notag \\
&\leq k_0 \epsilon_n \sum_{J \in \mathcal{J}_n } \sum_{i \in J} \rho_i (|G_i^n| / |F_i|) 
\leq k_0 \epsilon_n (1 + 1/ \delta_n) \phi(k_0)^{n+1}, \text{ by } \eqref{E4}, \notag \\
&= k_0 \phi(k_0) (1 + 1/ \delta_n) (\phi(k_0) / k_0)^n 
= k_0 \phi(k_0) [ (\phi(k_0) / k_0)^n + (\phi(k_0) / \lambda k_0)^n]. \notag
\end{align} 
Let \(B_n =  k_0 \phi(k_0) [ (\phi(k_0) / k_0)^n + (\phi(k_0) / \lambda k_0)^n]\).
Equations \eqref{E3} and \eqref{E5} now yield that
\[\sum_{i \in I_n} \rho_i \sum_{j \in F_i : \, a_j \in (\epsilon_{n+1}, \epsilon_n]} a_j /|F_i| 
\leq 2k_0 \delta_n + B_n, \, \forall \, n \geq 0,\]
and so, finally, \eqref{E1} gives us the estimate
\[\sum_{i \in I} \rho_i u_i^*(u) \leq \sum_{n=0}^\infty ( n \epsilon_n + 2k_0 \delta_n + B_n) < \infty,\]
as \(0 < \phi(k_0) / k_0 < \lambda < 1\).
The assertion of the lemma now follows as \((e_n)\) is an unconditional basis for \(X_{\mathcal{M}}\).
\end{proof}
\begin{proof}[Proof of Theorem \ref{T3}.]
It follows directly from Lemmas \ref{L2} and \ref{L5} that \((u_n^*)\) and \((z_n^*)\) are equivalent.
\end{proof}
\begin{Cor} \label{C3}
\(Z\) is isomorphic to a quotient of \(X_{\mathcal{M}}\) and the map
\(Q \colon  X_{\mathcal{M}} \to Z\) given by
\[Q \biggl ( \sum_n a_n e_n \biggr ) = \sum_n \biggl( \sum_{k \in F_n} a_k /|F_n| \biggr ) z_n \]
is a well-defined, bounded, linear surjection.
\end{Cor}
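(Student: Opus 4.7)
The plan is to realize $Q$ as the pre-adjoint of the map $z_n^* \mapsto u_n^*$, and then invoke Theorem \ref{T3} (via Lemmas \ref{L2} and \ref{L5}) to obtain both boundedness and surjectivity. First I would establish the duality identification. Since $Q(e_k) = z_n/|F_n|$ for every $k \in F_n$, a direct computation gives, for each $x = \sum_k a_k e_k \in c_{00}$ and each $m \in \mathbb{N}$,
\[ z_m^*(Q(x)) = \frac{1}{|F_m|}\sum_{k \in F_m} a_k = u_m^*(x). \]

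To prove that $Q$ extends to a bounded operator on $X_{\mathcal{M}}$, I would use that $(z_n^*)$ is a suppression $1$-unconditional basis of $Z^*$ (which follows from the shrinking hypothesis on $(z_n)$). For every $x \in c_{00}$,
\[ \|Q(x)\|_Z = \sup\biggl\{\biggl|\biggl(\sum_n \alpha_n u_n^*\biggr)(x)\biggr| : \biggl\|\sum_n \alpha_n z_n^*\biggr\|_{Z^*} \leq 1\biggr\}, \]
and by Lemma \ref{L5} every such functional $\sum_n \alpha_n u_n^*$ has $\|\cdot\|_*$-norm at most $C$. Hence $\|Q(x)\|_Z \leq C\|x\|$, and $Q$ extends to a bounded linear map $X_{\mathcal{M}} \to Z$.

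For surjectivity, the adjoint $Q^*: Z^* \to X_{\mathcal{M}}^*$ sends $z_n^* \mapsto u_n^*$ by the first computation, and by Lemma \ref{L2} there is $A > 0$ with $\|Q^*(z^*)\|_* \geq A\|z^*\|_{Z^*}$ for every $z^* \in Z^*$. Thus $Q^*$ is bounded below, and the closed range theorem forces $Q$ to have closed range in $Z$. Since $Q\bigl(\sum_{k \in F_n} e_k\bigr) = z_n$ for each $n$, the range contains every basis vector $(z_n)$ and is therefore dense, hence equal to $Z$. The induced map $X_{\mathcal{M}}/\ker Q \to Z$ is then an isomorphism by the open mapping theorem. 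I do not anticipate any significant obstacle here: the real work has already been absorbed into Theorem \ref{T3}, and what remains is a routine duality argument combining the two-sided domination of $(u_n^*)$ and $(z_n^*)$ with the closed range theorem.
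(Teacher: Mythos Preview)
Your argument is correct and follows essentially the same route as the paper: the paper likewise deduces boundedness of \(Q\) from the fact that \((z_n^*)\) dominates \((u_n^*)\) (Lemma~\ref{L5}), identifies \(Q^*(z_n^*)=u_n^*\), and then concludes surjectivity from the fact that \(Q^*\) is an isomorphic embedding (Theorem~\ref{T3}). Your write-up simply spells out the closed-range-plus-density step that the paper leaves implicit.
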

\begin{proof}
Since \((z_n^*)\) dominates \((u_n^*)\), we have that \(Q\) is a well-defined, bounded, linear operator.
It is easy to see now, that \(Q^*(z_n^*) = u_n^*\), for all \(n \in \mathbb{N}\), and thus
\(Q^*\) is an isomorphic embedding of \(Z^*\) into \(X_{\mathcal{M}}^*\), by Theorem \ref{T3}.
It follows now that \(Q\) is a surjection.
\end{proof}

\end{document}